\documentclass{amsart}

\usepackage{amsmath}
\usepackage{amssymb}
\usepackage{amsthm}

\newtheorem{theorem}{Theorem}[section]
\newtheorem{lemma}[theorem]{Lemma}

\theoremstyle{definition}

\theoremstyle{remark}
\newtheorem{remark}[theorem]{Remark}


\begin{document}


\title{Density of smooth functions in variable exponent Sobolev spaces}

\author{Thanasis Kostopoulos}
\email{thanoskostopoulos80@gmail.com}

\author{Nikos Yannakakis}
\email{nyian@math.ntua.gr}


\address{Department of Mathematics, National Technical University of Athens,
Zografou campus, Athens 15780, Greece}

\begin{abstract}
We show that if $p_-\geq 2$, then a sufficient condition for the density of smooth functions with compact support, in the variable exponent Sobolev space $W^{1,p(\cdot)}(\mathbb R^n)$, is that the Riesz potentials of compactly supported functions of $L^{p(\cdot)}(\mathbb R^n)$, are also elements of $L^{p(\cdot)}(\mathbb R^n)$.
Using this result we then prove that the above density holds if (i) $p_-\geq n$ or if (ii) $2\leq p_-< n$ and $p_+<\frac{np_-}{n-p_-}$.
Moreover our result allows us to give an alternative proof, for the case $p_-\geq 2$, that the local boundedness of the maximal operator and hence local log-H{\"o}lder continuity imply the density of smooth functions with compact support, in the variable exponent Sobolev space $W^{1,p(\cdot)}(\mathbb R^n)$.
\end{abstract}

\subjclass[2000]{46E30, 46E35}

\maketitle

\section{Introduction}
Let $\Omega$ be an open subset of $\mathbb R^{n}$. In what follows
$p\colon\Omega\to(1,\infty)$ is a measurable function, such that
$$1<p_-=\mathrm{ess}\inf_{x\in\Omega}\, p(x)\leq p_+ = \mathrm{ess}\sup_{x\in\Omega}\ \, p(x)<+\infty\,.$$

The variable exponent Lebesgue space $L^{p(\cdot)}(\Omega)$ is defined as the function space containing all measurable functions $f$ on $\Omega$, such that
\begin{equation}
\nonumber
\varrho_{p(\cdot)}(f)=\int_{\Omega} |f(x)|^{p(x)}\,dx<\infty\,.
\end{equation}

Equipped with the norm
\begin{equation}
\nonumber
\|f\|_{p(\cdot)} = \inf \big\{ \lambda > 0 : \varrho_{p(\cdot)}(f/\lambda) \leq 1 \big\}\,,
\end{equation}
the space $L^{p(\cdot)}(\Omega)$ becomes a separable and reflexive Banach space.

The variable exponent Sobolev space $W^{1,p(\cdot)} (\Omega)$ is defined as
\begin{equation}
\nonumber
W^{1,p(\cdot)} (\Omega)=\left\{f\in L^{p(\cdot)} (\Omega): D_if\in L^{p(\cdot)} (\Omega)\,,1\leq i\leq n\right\}\,,
\end{equation}
where by $D_i f$ we denote the $i$-th distributional partial derivative of $f$.

The space $W^{1,p(\cdot)}(\Omega)$ equipped with the norm
\begin{equation}
\nonumber
\| f\|_{1, p(\cdot)}=\| f\|_{p(\cdot)}+\sum_{i=1}^n\| D_i f\|_{p(\cdot)}
\end{equation}
is also a separable and reflexive Banach space.

Recently variable exponent Lebesgue and Sobolev spaces have attracted a lot of attention, mainly due to their use in PDE's and variational problems with non-standard growth (two celebrated applications being the modelling of electrorheological fluid and image processing).

An important difference between classical and variable exponent Sobolev spaces was pointed out quite early by V. Zhikov. In particular in \cite{zhikov2} he showed that non-regularity of the Lagrangian involved in a certain variational problem has a striking implication: smooth functions are not dense in the corresponding variable exponent Sobolev space. Note that variable exponent Lebesgue spaces are not translation invariant and hence convolution, being the main tool in the classical case, cannot be taken for granted.

Since then many researchers have tried to overcome this problem. First D. Edmunds and J. Rakosnik in \cite{edmunds} used a local monotonicity condition that allowed smoothing by convolution with an appropriately chosen anisotropic kernel.

A decisive step towards the understanding of the problem was made by the introduction of the so-called log-H{\"o}lder continuity.
Recall, see \cite[Definition 2.2]{cruz1}, that a function $p:\Omega\rightarrow\mathbb R$ is called
locally log-H{\"o}lder continuous on $\Omega$, if there exists $c_0>0$, such that
\begin{equation}
\nonumber
|p(x)-p(y)| \leq \frac{c_0}{-\log (\big|x-y| \big)}\,,
\end{equation}
for all $x,y\; \in \Omega$, with $|x-y|<1/2$.

S. Samko in \cite{samko1}, L. Diening in \cite{dhn} and D. Cruz-Uribe with A. Fiorenza in \cite{cruz} proved independently that local log-H{\"o}lder continuity is sufficient for the density of smooth functions, their success depending on that under this assumption some of the useful properties of convolution ``survive''. Note that a deeper reason why local log-H{\"o}lder continuity is sufficient is that it implies the local boundedness of the maximal operator, which in its turn guarantees density see \cite[Theorems 6.14 and 6.17]{cruz1}.

V. Zhikov returned to the problem in \cite{zhikov1} and proved that a weaker, than log-H{\"o}lder, modulus of continuity will suffice. We should mention here that V. Zhikov avoided the problematic convolution by using the fact that approximation by Lipschitz functions in the corresponding variable exponent Sobolev space is sufficient for the density of smooth ones.

Finally, P. Hasto in \cite{hasto} and X. Fan et al. in \cite{fan} proved independently that a combination of the monotonicity condition of D. Edmunds and J. Rakosnik in \cite{edmunds} with local log-H{\"o}lder continuity implies the required density.

Our main result in this paper is that if $p_-\geq 2$, then a sufficient condition for the density of $C^\infty_0(\mathbb{R}^n)$ in $W^{1,p(\cdot)} (\mathbb{R}^n)$ is
\begin{equation}
\label{hypo}
I_1(f)\in L^{p(\cdot)}(\mathbb{R}^n)\,,
\end{equation}
for any compactly supported function $f\in L^{p(\cdot)}(\mathbb{R}^n)$, where $I_1(f)$ is the Riesz potential of $f$. Recall that the Riesz potential $I_1(f)$ of a function  $f$ is defined by
\begin{equation}
\nonumber
I_1 (f)(x)=c\int_{\mathbb R^{n}} \frac{f(x-y)}{|x-y|^{n-1}}\,dy\,,
\end{equation}
with $c$ being a positive constant, depending on $n$, which will be defined in the sequel.

Note that if $p(\cdot)$ is constant, then the boundedness of the maximal operator in $L^p(\mathbb R^n)$, implies that (\ref{hypo}) is always true for compactly supported functions, see \cite[Proposition 6.22 and Remark 6.23]{cruz1}.

Applying the above we are able to present two density results:
the first one guarantees that the following conditions 
\begin{itemize}
\item[(i)] $p_-\geq n\,$ or
\item[(ii)] $2\leq p_-< n$ and $p_+ \leq \frac{n p_-}{n- p_-}$
\end{itemize}
are sufficient for density. We should mention here that (i) and (ii) are frequently used in the context of variable exponent Sobolev spaces, see for example \cite[Lemma 8.2.14]{dhhr}.

The second one states that if $p_-\geq 2$,  then the local boundedness of the maximal operator implies density and consequently, as we mentioned above, so does local log-H{\"o}lder continuity. Hence, under the restriction $p_-\geq 2$, we have an alternative proof of this important result.

Concluding, it seems interesting that two apparently unrelated types of results, the first concerning the relation of the exponent to the dimension of the space and the second concerning its regularity, are both justified by the same source: the ``nice''  behavior of the Riesz potential of functions with compact support. It is our hope that this fact may shed some additional light to the understanding of the subtle problem of density of smooth functions in variable exponent Sobolev spaces.


\section{Preliminaries}
In this section we present some results that we will need in the sequel. For more details concerning variable exponent function spaces, we refer the interested reader to the books \cite{cruz1}  and \cite{dhhr}.

We begin by a result that can be found in \cite[Theorem 3.3.11]{dhhr}. In what follows the notation $X\hookrightarrow Y$ means that $X$ is continuously embedded into $Y$, i.e. $X\subseteq Y$ and there exists $c>0$ such that
$$\|x\|_Y\leq c\|x\|_X\,,\text{ for all }x\in X\,.$$
\begin{theorem}
\label{variable}
Let $p, q, r$ be measurable functions on $\Omega$, with $p \leq q \leq r\leq \infty$ a.e. in $\Omega$. Then
\begin{equation}
\nonumber
L^{p(\cdot)} (\Omega) \cap L^{r(\cdot)} (\Omega) \hookrightarrow L^{q(\cdot)} (\Omega)\,.
\end{equation}
\end{theorem}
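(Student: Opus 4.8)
The plan is to prove the stronger, quantitative form of the statement, namely that there is a constant $c>0$ with
\begin{equation}
\nonumber
\|f\|_{q(\cdot)}\leq c\,\big(\|f\|_{p(\cdot)}+\|f\|_{r(\cdot)}\big)\qquad\text{for every }f\in L^{p(\cdot)}(\Omega)\cap L^{r(\cdot)}(\Omega)\,,
\end{equation}
which yields at once both the inclusion $L^{p(\cdot)}(\Omega)\cap L^{r(\cdot)}(\Omega)\subseteq L^{q(\cdot)}(\Omega)$ and the continuity of the embedding. After a normalization the whole argument reduces to the elementary pointwise fact that for $0\leq t\leq 1$ the map $s\mapsto t^{s}$ is nonincreasing, while for $t>1$ it is nondecreasing.

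First I would set aside the trivial case $f=0$ and put $\lambda=\|f\|_{p(\cdot)}+\|f\|_{r(\cdot)}>0$. Since $\|f/\lambda\|_{p(\cdot)}\leq 1$ and $\|f/\lambda\|_{r(\cdot)}\leq 1$, the standard equivalence between ``norm at most one'' and ``modular at most one'' (see \cite{dhhr,cruz1}) gives $\varrho_{p(\cdot)}(f/\lambda)\leq 1$ and $\varrho_{r(\cdot)}(f/\lambda)\leq 1$. Writing $g=f/\lambda$, I would split $\Omega=A\cup B$ with $A=\{x:|g(x)|\leq 1\}$ and $B=\{x:|g(x)|>1\}$. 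On $A$ the hypothesis $p\leq q$ a.e.\ gives $|g(x)|^{q(x)}\leq|g(x)|^{p(x)}$, and on $B$ the hypothesis $q\leq r$ a.e.\ gives $|g(x)|^{q(x)}\leq|g(x)|^{r(x)}$; integrating and using the two modular bounds,
\begin{equation}
\nonumber
\varrho_{q(\cdot)}(g)\leq\int_{A}|g|^{p(x)}\,dx+\int_{B}|g|^{r(x)}\,dx\leq\varrho_{p(\cdot)}(g)+\varrho_{r(\cdot)}(g)\leq 2\,.
\end{equation}
A scaling step then closes the proof: since $q_-\geq p_-\geq 1$, one has $\varrho_{q(\cdot)}(g/2)\leq 2^{-q_-}\varrho_{q(\cdot)}(g)\leq 1$, hence $\|g\|_{q(\cdot)}\leq 2$, i.e.\ $\|f\|_{q(\cdot)}\leq 2\lambda=2\big(\|f\|_{p(\cdot)}+\|f\|_{r(\cdot)}\big)$, so $c=2$ works.

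The only point that genuinely needs care --- and hence the main obstacle, minor as it is --- is the possibility that some of the exponents equal $\infty$ on a set of positive measure, say $r=\infty$ there. Then one works with the convention that $\varrho_{s(\cdot)}$ carries the extra term $\|\,\cdot\,\|_{L^\infty(\{s=\infty\})}$, and one observes that, since $\{p=\infty\}\subseteq\{q=\infty\}\subseteq\{r=\infty\}$ and the $L^\infty$-norm is monotone in the domain, the bound $\varrho_{r(\cdot)}(g)\leq 1$ already forces $|g|\leq 1$ a.e.\ on $\{r=\infty\}$; consequently $B$ meets $\{r=\infty\}$ only in a null set, the pointwise inequality on $B$ is unaffected, and the $L^\infty$-contribution to $\varrho_{q(\cdot)}(g)$ is dominated by the one in $\varrho_{r(\cdot)}(g)$. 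The bookkeeping then simply replaces the constant by, say, $c=3$, with no new idea required. (One could alternatively argue through a variable-exponent H\"older interpolation $1/q=\theta/p+(1-\theta)/r$, but the direct splitting above is shorter and avoids the measurability fuss about the function $\theta(\cdot)$.)
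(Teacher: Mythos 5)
Your proof is correct. Note, however, that the paper does not prove this statement at all: it is quoted verbatim from \cite[Theorem 3.3.11]{dhhr}, so there is no in-paper argument to compare against. Your argument --- normalize by $\lambda=\|f\|_{p(\cdot)}+\|f\|_{r(\cdot)}$, use the unit-ball property to pass to modulars, split $\Omega$ into $\{|g|\le 1\}$ (where $|g|^{q(x)}\le|g|^{p(x)}$) and $\{|g|>1\}$ (where $|g|^{q(x)}\le|g|^{r(x)}$), and rescale --- is precisely the standard proof given in that reference, including the embedding constant $2$ and the extra bookkeeping for the $L^\infty$-part of the modular when exponents take the value $\infty$; in the setting actually used in this paper ($1<p_-\le p_+<\infty$) that last caveat is not even needed.
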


The next result is useful, since it allows us to work with bounded and compactly supported  functions.
\begin{theorem}\cite[Corollary 9.1.4]{dhhr}
\label{bounded}
Bounded Sobolev functions with compact support are dense in $$W^{1,p(\cdot)}(\mathbb R^{n})\,.$$
\end{theorem}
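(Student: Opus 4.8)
The plan is to reduce Theorem~\ref{bounded} to the two classical truncations — first in the range, then in space — performed at the level of the modular and transferred to the norm by means of the standing assumption $p_+<\infty$. Two elementary facts will be used repeatedly. Since $p_+<\infty$, every $f\in L^{p(\cdot)}(\mathbb R^n)$ has $\varrho_{p(\cdot)}(f)=\int_{\mathbb R^n}|f|^{p(x)}\,dx<\infty$, so $|f|^{p(x)}\in L^1(\mathbb R^n)$; and, again because $p_+<\infty$, a sequence tends to $0$ in $\|\cdot\|_{p(\cdot)}$ as soon as its modular tends to $0$ (from $\varrho_{p(\cdot)}(g/\varepsilon)\leq\varepsilon^{-p_+}\varrho_{p(\cdot)}(g)$ for $0<\varepsilon<1$). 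It therefore suffices, in each step, to make the relevant modulars small.

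\emph{Step 1: truncation in the range.} Given $f\in W^{1,p(\cdot)}(\mathbb R^n)$ put $f_k=\max\{-k,\min\{k,f\}\}$. Since $W^{1,p(\cdot)}(\mathbb R^n)\subseteq W^{1,1}_{\mathrm{loc}}(\mathbb R^n)$, the classical truncation theorem gives $f_k\in W^{1,p(\cdot)}(\mathbb R^n)$, $|f_k|\leq k$, and $D_if_k=\chi_{\{|f|<k\}}D_if$. Hence $|f-f_k|\leq|f|\chi_{\{|f|\geq k\}}$ and $|D_if-D_if_k|=|D_if|\chi_{\{|f|\geq k\}}$, and since the sets $\{|f|\geq k\}$ decrease to a null set, dominated convergence (with dominating functions $|f|^{p(x)}$ and $|D_if|^{p(x)}$ in $L^1(\mathbb R^n)$) yields $\varrho_{p(\cdot)}(f-f_k)\to0$ and $\varrho_{p(\cdot)}(D_if-D_if_k)\to0$. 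Thus $f_k\to f$ in $W^{1,p(\cdot)}(\mathbb R^n)$, and it remains to approximate a \emph{bounded} $g\in W^{1,p(\cdot)}(\mathbb R^n)$ by bounded, compactly supported functions.

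\emph{Step 2: truncation in space.} Fix $\eta_j\in C_0^\infty(\mathbb R^n)$ with $0\leq\eta_j\leq1$, $\eta_j\equiv1$ on $B_j$, $\mathrm{supp}\,\eta_j\subseteq B_{2j}$ and $|\nabla\eta_j|\leq C_0/j$, and set $g_j=\eta_jg$; this is bounded, compactly supported, and $D_ig_j=\eta_jD_ig+g\,\partial_i\eta_j$ by the product rule (legitimate since $g\in W^{1,1}_{\mathrm{loc}}$ is bounded and $\eta_j$ is smooth). The contributions $|g-g_j|\leq|g|\chi_{\{|x|>j\}}$ and $|(\eta_j-1)D_ig|\leq|D_ig|\chi_{\{|x|>j\}}$ have modulars tending to $0$ by the $L^1$-tails of $|g|^{p(x)}$ and $|D_ig|^{p(x)}$. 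The only genuinely new term is $g\,\partial_i\eta_j$, supported in the annulus $A_j=\{j\leq|x|\leq2j\}$ and bounded there by $\tfrac{C_0}{j}|g|$; here one must \emph{not} use the pointwise bound on $g$ (that would only give the factor $|A_j|\sim j^n$ and require $p_->n$) but keep $|g|^{p(x)}$ intact: for $j\geq C_0$ one has $\big(\tfrac{C_0}{j}\big)^{p(x)}\leq\big(\tfrac{C_0}{j}\big)^{p_-}$, whence
\begin{equation}
\nonumber
\int_{A_j}\Big(\tfrac{C_0}{j}\,|g|\Big)^{p(x)}\,dx\;\leq\;\Big(\tfrac{C_0}{j}\Big)^{p_-}\int_{A_j}|g|^{p(x)}\,dx\;\leq\;\Big(\tfrac{C_0}{j}\Big)^{p_-}\varrho_{p(\cdot)}(g)\;\longrightarrow\;0\,.
\end{equation}
Hence $\varrho_{p(\cdot)}(D_ig-D_ig_j)\to0$ for each $i$, so $g_j\to g$ in $W^{1,p(\cdot)}(\mathbb R^n)$, completing the argument.

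The step I expect to be the main obstacle is the last term of Step 2: the naive spatial cut-off estimate fails because the transition annulus has measure growing like $j^n$, and the remedy is to exploit the decay at infinity of the $L^1$ function $|g|^{p(x)}$ together with the fact that $C_0/j\leq1$ for large $j$. Everything else is the routine interplay between modular and norm convergence available when $p_+<\infty$, together with the chain- and product-rule identities for distributional derivatives, which are valid here since variable exponent Sobolev functions are locally in $W^{1,1}$.
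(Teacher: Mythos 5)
The paper offers no proof of this statement; it is simply quoted from \cite[Corollary 9.1.4]{dhhr}. Your two-step truncation argument --- first in the range via $f_k=\max\{-k,\min\{k,f\}\}$, then in space via smooth cut-offs, with modular convergence upgraded to norm convergence using $p_+<\infty$, and with the gradient-of-cut-off term controlled by keeping $|g|^{p(x)}$ intact rather than using the sup bound on $g$ --- is correct and is essentially the standard argument given in that reference.
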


We will also use the following integral representation of $W^{1,1}(\mathbb{R}^n)$ functions. In what follows by $\omega_n$ we denote the volume of the unit ball of $\mathbb{R}^n$.
\begin{lemma}\cite[Lemma 7.14]{gilbarg}
\label{representation}
Let $f\in W^{1,1}(\mathbb{R}^n)$. Then
\begin{equation}
\nonumber
f(x)=\frac{1}{n\omega_n}\int_{\mathbb{R}^n}\sum_{i=1}^n\frac{(x_i-y_i)D_if(y)}{|x-y|^n}\,dy\,,\text{ a.e. in }\mathbb{R}^n\,.
\end{equation}
\end{lemma}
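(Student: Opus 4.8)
The plan is to establish the identity first for test functions and then pass to the limit by density. For $f\in C_0^\infty(\mathbb{R}^n)$ and a fixed $x\in\mathbb{R}^n$, I would start from the one–dimensional fundamental theorem of calculus along rays emanating from $x$: for every $\omega\in S^{n-1}$ the map $r\mapsto f(x+r\omega)$ is smooth and eventually vanishes, so
\begin{equation}
\nonumber
f(x)=-\int_0^\infty \nabla f(x+r\omega)\cdot\omega\,dr\,.
\end{equation}
Averaging this over $\omega\in S^{n-1}$ against surface measure, recalling that $\mathcal H^{n-1}(S^{n-1})=n\omega_n$, and then rewriting the double integral in Cartesian coordinates via the substitution $y=x+r\omega$ (so that $dy=r^{n-1}\,dr\,d\omega$ and $\omega\,r^{1-n}=(y-x)\,|y-x|^{-n}$) gives
\begin{equation}
\nonumber
n\omega_n\,f(x)=\int_{\mathbb{R}^n}\sum_{i=1}^n\frac{(x_i-y_i)D_if(y)}{|x-y|^n}\,dy\,,
\end{equation}
which is the asserted formula for smooth compactly supported $f$.

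Next I would remove the smoothness hypothesis. Given $f\in W^{1,1}(\mathbb{R}^n)$, choose $f_k\in C_0^\infty(\mathbb{R}^n)$ with $f_k\to f$ in $W^{1,1}(\mathbb{R}^n)$; the identity above holds for each $f_k$. Denoting by $g$ and $g_k$ the right–hand sides built from $\nabla f$ and $\nabla f_k$ respectively, I would bound pointwise
\begin{equation}
\nonumber
|g_k(x)-g(x)|\le \frac{1}{n\omega_n}\int_{\mathbb{R}^n}\frac{|\nabla f_k(y)-\nabla f(y)|}{|x-y|^{n-1}}\,dy\,,
\end{equation}
integrate over an arbitrary ball $B(0,R)$, and apply Tonelli together with the elementary fact that $\sup_{y\in\mathbb{R}^n}\int_{B(0,R)}|x-y|^{1-n}\,dx<\infty$. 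This yields $\|g_k-g\|_{L^1(B(0,R))}\le C_R\|\nabla f_k-\nabla f\|_{L^1}\to 0$ for every $R$; the same estimate applied to $\nabla f$ itself shows that $g$ is finite almost everywhere, so the claimed formula is meaningful. Hence $g_k\to g$ in $L^1_{\mathrm{loc}}(\mathbb{R}^n)$, so along a subsequence $g_k\to g$ a.e.; passing to a further subsequence we also have $f_k\to f$ a.e. Since $f_k=g_k$ pointwise, it follows that $f=g$ a.e., which is the lemma.

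The only genuinely delicate point, and the one I would be most careful with, is the passage to the limit in the singular integral: the kernel $|x-y|^{1-n}$ is not bounded, so the convergence $g_k\to g$ cannot be obtained uniformly and must instead be extracted from the $L^1_{\mathrm{loc}}$ bound above (equivalently, from the weak-type $(1,\tfrac{n}{n-1})$ bound for the Riesz potential $I_1$); correspondingly the conclusion is only an almost-everywhere identity, reached by extracting simultaneously a.e.-convergent subsequences on both sides. Everything else — the polar-coordinate change of variables, the use of Tonelli, and the finiteness of $\int_{B(0,R)}|x-y|^{1-n}\,dx$ — is routine.
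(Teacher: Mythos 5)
Your proposal is correct and is essentially the standard argument for this result; the paper itself gives no proof, citing \cite[Lemma 7.14]{gilbarg}, and that reference proves it in exactly the way you do (radial fundamental theorem of calculus averaged over $S^{n-1}$ for smooth compactly supported functions, then density of $C_0^\infty(\mathbb{R}^n)$ in $W^{1,1}(\mathbb{R}^n)$ combined with the uniform bound $\sup_y \int_{B(0,R)}|x-y|^{1-n}\,dx<\infty$ to pass to the limit). Your attention to the $L^1_{\mathrm{loc}}$ convergence of the singular integrals and the extraction of a.e.\ convergent subsequences on both sides is exactly the right way to close the argument.
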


Finally, if $0 < \alpha < n$, the Riesz potential of a locally integrable function $f$ is defined by
\begin{equation}
\nonumber
I_\alpha (f)(x)=\frac{1}{\gamma (\alpha)}\frac{1}{|x|^{n-\alpha}}*f=\frac{1}{\gamma (\alpha)} \int_{\mathbb R^{n}} \frac{f(x-y)}{|x-y|^{n- \alpha}}\,dy\,,
\end{equation}
where
$$\gamma (\alpha) = \pi ^{n/2} 2^{\alpha} \frac{\Gamma (\frac{\alpha}{2})}{\Gamma (\frac{n- \alpha}{2})}\,.$$

It is known, see for example \cite[Theorem 1, p. 119]{stein}, that if $f\in L^p(\mathbb{R}^n)$, then
\begin{equation}
\label{riesz1}
I_\alpha(f) \in L^{\frac{np}{n- \alpha p}}(\mathbb{R}^n)\,.
\end{equation}

It is important to notice, as we have already mentioned in the Introduction, that variable exponent functions spaces are not translation invariant. In fact, translations are bounded on $L^{p(\cdot)} (\Omega)$, see \cite[Proposition 3.6.1]{dhhr}, if and only if the variable exponent $p(\cdot)$ is constant. An immediate consequence of this, is the ``failure of convolution'', meaning that Young's inequality does not in general hold.
\section{Main result}
It is well-known, see for example \cite[p.23]{Riesz}, that if $f$ is a rapidly decreasing continuous function then
$$\lim_{\alpha \rightarrow 0^+}I_\alpha(f)(x)=f(x)\,,\text{ for all }x\in\mathbb R^n\,.$$
In the next lemma we show that a similar thing holds also for $L^p(\mathbb R^{n})$ functions, with compact support, as long as $p\geq 2$.
\begin{lemma}
\label{lemma1}
Let $p\geq 2$. If $f\in L^{p}(\mathbb R^{n})$ and has compact support, then
\begin{equation}
\nonumber
\lim_{\alpha \rightarrow 0^+} I_\alpha(f)=f, \text{ in } L^2(\mathbb{R}^n).
\end{equation}
\end{lemma}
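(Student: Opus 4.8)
The plan is to pass to the Fourier transform, where the Riesz potential $I_\alpha$ becomes multiplication by the symbol $(2\pi|\xi|)^{-\alpha}$, and then let $\alpha\to 0^+$ by dominated convergence. First I would record the elementary observation that, since $f$ has compact support and $p\ge 2$, restricting to the bounded support and using H\"older's inequality (equivalently Theorem \ref{variable} with constant exponents) gives $f\in L^1(\mathbb R^n)\cap L^2(\mathbb R^n)$. Consequently $\widehat f\in L^\infty(\mathbb R^n)$ with $\|\widehat f\|_\infty\le\|f\|_1$, and $\widehat f\in L^2(\mathbb R^n)$ by Plancherel.

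Next I would verify that $I_\alpha(f)$ is a well-defined element of $L^2(\mathbb R^n)$ for all small $\alpha>0$. Applying \eqref{riesz1} to $f\in L^1$ and to $f\in L^2$ gives $I_\alpha(f)\in L^{n/(n-\alpha)}(\mathbb R^n)$ and $I_\alpha(f)\in L^{2n/(n-2\alpha)}(\mathbb R^n)$; and for $0<\alpha<n/2$ one checks $\tfrac{n}{n-\alpha}<2<\tfrac{2n}{n-2\alpha}$, so Theorem \ref{variable} (constant exponents) yields $I_\alpha(f)\in L^2(\mathbb R^n)$. With the normalization constant $\gamma(\alpha)$ as defined above, the Riesz potential satisfies $\widehat{I_\alpha(f)}(\xi)=(2\pi|\xi|)^{-\alpha}\,\widehat f(\xi)$; this identity is standard for Schwartz functions, and I would extend it to $f\in L^1\cap L^2$ by approximation (or simply quote it from \cite{stein}). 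Plancherel's theorem then gives
\[
\|I_\alpha(f)-f\|_{L^2(\mathbb R^n)}^2=\int_{\mathbb R^n}\bigl|(2\pi|\xi|)^{-\alpha}-1\bigr|^2\,|\widehat f(\xi)|^2\,d\xi .
\]

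Finally I would send $\alpha\to 0^+$. Pointwise $(2\pi|\xi|)^{-\alpha}\to 1$ for every $\xi\ne 0$, so the integrand tends to $0$ a.e. For the dominating function I would split $\mathbb R^n$ into the region $\{|\xi|\ge 1/(2\pi)\}$, where $0<(2\pi|\xi|)^{-\alpha}\le 1$ and hence the integrand is bounded by $|\widehat f(\xi)|^2\in L^1(\mathbb R^n)$, and the ball $\{|\xi|<1/(2\pi)\}$, where for $0<\alpha<\alpha_0$ we have $(2\pi|\xi|)^{-\alpha}\le(2\pi|\xi|)^{-\alpha_0}$, so the integrand is bounded by $\|\widehat f\|_\infty^2\,(2\pi|\xi|)^{-2\alpha_0}$, which is integrable on that ball provided $2\alpha_0<n$. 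Fixing any $\alpha_0\in(0,n/2)$ produces one integrable majorant valid for all $\alpha\in(0,\alpha_0)$, and dominated convergence finishes the proof.

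I expect the only delicate point to be the justification of the multiplier identity $\widehat{I_\alpha(f)}=(2\pi|\xi|)^{-\alpha}\widehat f$ for $f\in L^1\cap L^2$ rather than Schwartz $f$, together with keeping track of $\gamma(\alpha)$ and the Fourier normalization so that the symbol has no hidden $\alpha$-dependent constant; once $I_\alpha(f)\in L^2$ is established, the remaining argument is a routine application of Plancherel and dominated convergence.
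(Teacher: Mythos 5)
Your proposal follows essentially the same route as the paper: pass to the Fourier side, identify the multiplier of $I_\alpha$ as $(2\pi|\xi|)^{-\alpha}$ (which indeed matches the paper's constant $2^{-\alpha}\pi^{-\alpha}|k|^{-\alpha}$ once $\gamma(\alpha)$ is unwound), and conclude by dominated convergence using $\widehat f\in L^\infty\cap L^2$, with the same split of frequency space near and away from the origin. Your final step is in fact a little cleaner: you compute $\|I_\alpha(f)-f\|_2^2$ directly by Plancherel, whereas the paper proves $\|I_\alpha(f)\|_2\to\|f\|_2$ and $\langle I_\alpha(f),f\rangle\to\|f\|_2^2$ separately and then combines them; the two are equivalent via the expansion of $\|I_\alpha(f)-f\|_2^2$.

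There is, however, one step that fails as written: your justification that $I_\alpha(f)\in L^2(\mathbb R^n)$. You apply \eqref{riesz1} to $f\in L^1$ to get $I_\alpha(f)\in L^{n/(n-\alpha)}$, but the Hardy--Littlewood--Sobolev theorem behind \eqref{riesz1} (Stein, Theorem 1, p.~119) requires $1<p<n/\alpha$; at the endpoint $p=1$ only a weak-type estimate holds, so that interpolation endpoint is not available. The repair is immediate and is exactly what the paper does: since $f\in L^1\cap L^2$ you have $f\in L^{2n/(n+2\alpha)}(\mathbb R^n)$, and this exponent lies strictly between $1$ and $2$ and is chosen so that \eqref{riesz1} maps it precisely onto $L^2$, giving $I_\alpha(f)\in L^2(\mathbb R^n)$ in one step. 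With that substitution (and the routine extension of the multiplier identity from Schwartz functions to $L^1\cap L^2$, which you already flag and which the paper handles by citing Lieb--Loss), your argument is complete.
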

\begin{proof}
Since $p\geq 2$ and $f$ has compact support we have that
$$f\in L^{\frac{2n}{n+2\alpha}}(\mathbb{R}^n)\,,$$  for all $0<\alpha<n$. Let
$$c_{\alpha} = \pi^{-\alpha/2} \Gamma (\frac{\alpha}{2})$$
and
$$g_\alpha=c_{n-\alpha}\frac{1}{|x|^{n-\alpha}}*f\,.$$
By (\ref{riesz1}) the functions $g_\alpha$ belong to $L^2(\mathbb{R}^n)$. Thus if we additionally assume that $\alpha<n/2$, then using  \cite[Corollary 5.10]{ll} we have that
\begin{equation}
\label{fourier}
c_{\alpha} |k|^{-\alpha} \widehat{f}(k) = \widehat{g}_\alpha(k),
\end{equation}
where  $\widehat{f}$, $\widehat{g}_\alpha$ are the Fourier transforms of $f$ and $g_\alpha$ respectively.

Since $\| \widehat{g}_\alpha \|_{2} = \| g_\alpha \|_{2}$ we get that,
\begin{equation}
\nonumber
\int_{\mathbb{R}^n} |c_{\alpha} |k|^{-\alpha} \widehat{f}(k)|^{2} dk = \int_{\mathbb{R}^n} |g_\alpha(k)|^{2} dk\,.
\end{equation}
Hence
\begin{equation}
\label{riesz}
(2\pi)^{-2\alpha} \int_{\mathbb{R}^n} \frac{|\widehat{f}(k)|^{2}}{|k|^{2\alpha}} dk = \int_{\mathbb{R}^n} | (I_{\alpha}f)(k) |^{2} dk\,.
\end{equation}
If $\alpha<1/2$ and taking in mind that $f$ is also in $L^1(\mathbb{R}^n)$ (so its Fourier transform is in $L^{\infty}(\mathbb{R}^n)$), we have that
\begin{equation}
\nonumber
\frac{|\widehat{f}(k)|^{2}}{|k|^{2\alpha}}\leq \chi _{B(0,1)}(k) \frac{\| \widehat{f} \|_{\infty}^{2}}{|k|} + \chi _{B(0,1)^{c}}(k)|\widehat{f}(k)|^{2}\,.
\end{equation}

Since the right-hand side of the above inequality is an $L^1$-function, taking limits in (\ref{riesz}) and using the dominated convergence theorem we have that
\begin{equation}
\label{norms}
\lim _{\alpha \rightarrow 0^+} \int_{\mathbb{R}^n} | (I_{\alpha}f)(k) |^{2} dk = \int_{\mathbb{R}^n} |\widehat{f}(k)|^{2} dk = \int_{\mathbb{R}^n} |f(k)|^{2} dk\,.
\end{equation}
On the other hand from (\ref{fourier}) we get that
\begin{equation}
\nonumber
\widehat{I_{\alpha} (f)}(k)= \frac{2^{- \alpha}}{\pi^{\alpha}} \frac{\widehat{f}(k)}{|k|^{\alpha}}.
\end{equation}
Multiplying both sides by $\bar{\widehat f}$ and integrating, we have
\begin{equation}
\nonumber
\int_{\mathbb{R}^n} \widehat{I_{\alpha} (f)(k)}\overline{\widehat{f}(k)}\, dk = \frac{2^{- \alpha}}{\pi^{\alpha}} \int_{\mathbb{R}^n} \frac{|\widehat{f}(k)|^2}{|k|^{\alpha}}\, dk\,.
\end{equation}
Then using Parseval's formula we get that
\begin{equation}
\nonumber
\int_{\mathbb{R}^n} I_{\alpha} (f)(k) \overline{f(k)}\, dk = \frac{2^{- \alpha}}{\pi^{\alpha}} \int_{\mathbb{R}^n} \frac{|\widehat{f}(k)|^2}{|k|^{\alpha}}\, dk.
\end{equation}
As before, it is easy to see that $\frac{ | \widehat{f}(k) |^{2} }{|k|^{\alpha}} $ is bounded by an $L^{1}$-function and thus taking limits and using the dominated convergence theorem again we have that
\begin{equation}
\label{weak}
\lim _{\alpha \rightarrow 0^+} \int_{\mathbb{R}^n} I_{\alpha} (f)(k) \overline{f(k)} dk = \int_{\mathbb{R}^n} |\widehat{f}(k)|^{2} dk = \int_{\mathbb{R}^n} |f(k)|^{2} dk.
\end{equation}
Hence by (\ref{norms}) and (\ref{weak}) we have that $\| I_{\alpha}(f)\|_{L^2}\rightarrow\|f\|_{L^2}$, $\langle I_{\alpha}(f), f\rangle\rightarrow \|f\|_{L^2}$, where by $\langle\cdot,\cdot\rangle$ we denote the inner product of $L^{2}(\mathbb{R}^n)$. So
$I_{\alpha}(f) \rightarrow f$ in $L^{2}(\mathbb{R}^n)$, as $\alpha\rightarrow 0^+$.
\end{proof}

The next lemma provides us with a useful tool in order to approximate a Sobolev function with compactly supported smooth ones.

\begin{lemma}
\label{lemma2}
There exists a sequence $(g_\lambda)$, $\lambda>1$, of functions in $C_0^\infty(\mathbb R^n)$, which are $0$ in the ball $B(0,\frac{1}{\lambda} )$, $1$ in
$B(0, \lambda) \setminus B(0, \frac{2}{\lambda})$ and $0$ in $\mathbb R^n\setminus B(0, 2 \lambda)$, such that their first derivatives are bounded by the function $\frac{c_1}{|x|}$, where $c_1$ is a positive constant.
\end{lemma}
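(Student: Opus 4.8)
The plan is to produce $g_\lambda$ as a product $g_\lambda = h_\lambda\,k_\lambda$ of two radial cut-off functions, each obtained by rescaling a single fixed one-dimensional bump: $h_\lambda$ will remove a neighbourhood of the origin and $k_\lambda$ a neighbourhood of infinity. Fix $\phi\in C^\infty(\mathbb R)$ with $0\le\phi\le 1$, $\phi\equiv 0$ on $(-\infty,1]$ and $\phi\equiv 1$ on $[2,\infty)$, and fix $\psi\in C^\infty(\mathbb R)$ with $0\le\psi\le1$, $\psi\equiv1$ on $(-\infty,1]$ and $\psi\equiv0$ on $[2,\infty)$. Then set
\[
h_\lambda(x)=\phi(\lambda|x|),\qquad k_\lambda(x)=\psi(|x|/\lambda),\qquad g_\lambda=h_\lambda k_\lambda .
\]

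First I would verify the prescribed values. If $|x|<1/\lambda$ then $\lambda|x|<1$, so $h_\lambda(x)=0$ and hence $g_\lambda\equiv0$ on $B(0,1/\lambda)$. If $|x|\ge 2\lambda$ then $|x|/\lambda\ge2$, so $k_\lambda(x)=0$ and $g_\lambda\equiv 0$ outside $B(0,2\lambda)$. On $B(0,\lambda)\setminus B(0,2/\lambda)$ one has $\lambda|x|\ge2$ and $|x|/\lambda<1$, so $h_\lambda(x)=k_\lambda(x)=1$ and $g_\lambda\equiv1$ there (this set is empty for $1<\lambda\le\sqrt2$, in which case the condition is vacuous). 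For smoothness, note that $h_\lambda$ equals the constant $0$ on $B(0,1/\lambda)$ and $k_\lambda$ equals the constant $1$ on $B(0,\lambda)$, so both are constant near the origin, while away from the origin they are smooth functions of $|x|$; hence $h_\lambda,k_\lambda\in C^\infty(\mathbb R^n)$, and since $\operatorname{supp}k_\lambda\subseteq\overline{B(0,2\lambda)}$ is compact, $g_\lambda\in C_0^\infty(\mathbb R^n)$.

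The one point requiring care is the derivative bound, and it is precisely here that the choice of rescalings matters. Differentiating, $\nabla h_\lambda(x)=\lambda\,\phi'(\lambda|x|)\,\frac{x}{|x|}$ vanishes unless $1/\lambda<|x|<2/\lambda$, on which set $\lambda<2/|x|$, so $|\nabla h_\lambda(x)|\le 2\|\phi'\|_\infty/|x|$; similarly $\nabla k_\lambda(x)=\lambda^{-1}\,\psi'(|x|/\lambda)\,\frac{x}{|x|}$ vanishes unless $\lambda<|x|<2\lambda$, on which set $1/\lambda<1/|x|$, so $|\nabla k_\lambda(x)|\le\|\psi'\|_\infty/|x|$. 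Since $0\le h_\lambda,k_\lambda\le1$, the product rule gives
\[
|\nabla g_\lambda(x)|\le|\nabla h_\lambda(x)|+|\nabla k_\lambda(x)|\le\frac{2\|\phi'\|_\infty+\|\psi'\|_\infty}{|x|}=\frac{c_1}{|x|},\qquad x\ne0,
\]
with $c_1=2\|\phi'\|_\infty+\|\psi'\|_\infty$ independent of $\lambda$, which is the assertion. So I do not expect a genuine obstacle; the only subtleties are checking $C^\infty$-regularity at the origin and arranging that the transition layers of $h_\lambda$ and $k_\lambda$ have width comparable to their distance from the origin, which is exactly what converts the naive estimates $|\nabla h_\lambda|\lesssim\lambda$ and $|\nabla k_\lambda|\lesssim 1/\lambda$ into a single $\lambda$-independent bound of the form $c_1/|x|$.
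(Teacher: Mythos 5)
Your construction is essentially the paper's: the authors take a one-dimensional smooth step $\psi$ (built from $e^{-1/(x-1)}$) and set $g_\lambda=\psi(\lambda|x|)\,\bigl(1-\psi(|x|/\lambda)\bigr)$, which is exactly your product $h_\lambda k_\lambda$, and your explicit verification of the $c_1/|x|$ derivative bound (the one point the paper dismisses as easy to see) is the right argument. One trivial slip: on the set $\lambda<|x|<2\lambda$ you have $1/\lambda>1/|x|$, not $1/\lambda<1/|x|$; the inequality you actually need comes from the upper bound $|x|<2\lambda$, which gives $|\nabla k_\lambda(x)|\le 2\|\psi'\|_\infty/|x|$ and hence $c_1=2\|\phi'\|_\infty+2\|\psi'\|_\infty$, which of course still proves the lemma.
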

\begin{proof}
We consider the functions
$$f(x)= \left\{\begin{array}{ccl} e^{-\frac{1}{x-1}}&,&\text{ if }x>1 \\ 0&,& \text{ if }x\leq1
\end{array}\right.$$
and
\begin{equation}
\nonumber
\qquad \qquad \psi(x) = \frac{f(x)}{f(x)+f(3-x)}.
\end{equation}
The denominator is always different from zero, the first derivative of $f$ is continuous and bounded, so  the first  derivative of $\psi$ is continuous and bounded.
Then defining $\psi_\lambda$ on $\mathbb R^n$, with
$$\psi_\lambda(x)=\psi(\lambda |x|)$$
we have that
\begin{eqnarray*}
0<\psi_\lambda (x)<1&,& \text{ if }\frac{1}{\lambda} < |x| < \frac{2}{\lambda}\\
\psi_\lambda (x)=0&,& \text{ if } |x| \leq \frac{1}{\lambda}\\
\psi_\lambda (x)=1&,& \text{ if }\ |x| \geq \frac{2}{\lambda}.
\end{eqnarray*}
It is easy to see that $\psi_\lambda \in C^\infty(\mathbb R^n)$ and its first derivatives are bounded by the function $\frac{c_1}{|x|}$,  for some positive constant $c_1$.
Analogously, we construct the functions $\psi_\lambda'(x) \in C^\infty(\mathbb R^n)$, with
$$\psi_\lambda'(x) = \psi (\frac{|x|}{\lambda})$$
and
\begin{eqnarray*}
0<\psi_\lambda' (x)<1&,& \text{ if }\lambda < |x| < 2 \lambda\\
\psi_\lambda' (x)=0&,& \text{ if }|x| \leq \lambda\\
\psi_\lambda' (x)=1&,& \text{ if }|x| \geq 2 \lambda,
\end{eqnarray*}
which have their derivatives bounded by a constant $c$.

It can be easily seen that the functions
$$g_\lambda = \psi_\lambda(1- \psi_\lambda')$$
have the desired properties and hence the proof is completed.
\end{proof}

Our main result is the following.

\begin{theorem}
\label{theorem1}
Let $p_-\geq 2$ and assume that $$I_1(f)\in L^{p(\cdot)}(\mathbb{R}^n)\,,$$ for any $f\in L^{p(\cdot)}(\mathbb{R}^n)$, with compact support. Then
$C_0^\infty (\mathbb R^n)$ is dense in $W^{1,p(\cdot)} (\mathbb R^{n})$.
\end{theorem}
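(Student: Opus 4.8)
The plan is to reduce everything to approximating a \emph{bounded, compactly supported} Sobolev function $f$, which we may do by Theorem~\ref{bounded}. For such an $f$ we have $f\in W^{1,1}(\mathbb R^n)$, so Lemma~\ref{representation} gives the integral representation of $f$ in terms of the Riesz-type kernel $\sum_i (x_i-y_i)|x-y|^{-n}$. The first step is to use the cutoff functions $g_\lambda$ from Lemma~\ref{lemma2} to localize away from the singularity of the kernel and away from infinity: set $f_\lambda$ to be the function obtained by inserting $g_\lambda(x-y)$ into the integral representation (equivalently, mollifying the kernel), so that $f_\lambda$ is smooth with compact support, and one checks $f_\lambda\to f$ in $L^{p(\cdot)}(\mathbb R^n)$ using that $|\nabla f|\in L^{p(\cdot)}$ has compact support, the kernel tail estimate, and the bound $|\nabla g_\lambda|\le c_1/|x|$, which makes the error controlled by $I_1(|\nabla f|\cdot\chi_K)$ on the annuli where $g_\lambda\ne 1$ — this is exactly where hypothesis \eqref{hypo} enters, guaranteeing $I_1$ of a compactly supported $L^{p(\cdot)}$ function lies in $L^{p(\cdot)}$.

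The heart of the matter is convergence of the \emph{derivatives}. Differentiating the representation of $f_\lambda$ produces terms of two kinds: one where the derivative falls on the kernel, giving (after the cutoff) a genuinely singular convolution of $\nabla f$ of Calder\'on–Zygmund type, and one where it falls on the cutoff $g_\lambda$, giving a term bounded by $c_1 I_1(|\nabla f|\chi_K)/|x|$-type expressions that vanish as $\lambda\to\infty$ by dominated convergence together with hypothesis \eqref{hypo}. For the singular part I would \emph{not} attempt to bound the singular integral operator on $L^{p(\cdot)}$ directly (that would need boundedness of the maximal operator, which we are trying to avoid). Instead I use Lemma~\ref{lemma1}: the singular convolution is the $\alpha\to 0^+$ limit of $I_\alpha(\nabla f)$ (up to constants coming from $\gamma(\alpha)$, $c_\alpha$), and since $\nabla f\in L^{p}(\mathbb R^n)$ with $p=p_-\ge 2$ and has compact support, Lemma~\ref{lemma1} gives convergence in $L^2$; combined with the fact that the relevant functions also live in $L^{p_-}\cap L^{p_+}$ on a fixed compact set, Theorem~\ref{variable} upgrades this to $L^{p(\cdot)}$ convergence on compact sets, and the tails are handled as before. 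Assembling the pieces along a suitable joint limit in $\alpha$ and $\lambda$ yields a sequence in $C_0^\infty(\mathbb R^n)$ converging to $f$ in $W^{1,p(\cdot)}(\mathbb R^n)$.

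The main obstacle I expect is making the two limiting parameters cooperate: the cutoff parameter $\lambda\to\infty$ must be coordinated with the Riesz-regularization parameter $\alpha\to 0^+$ so that the smoothing is genuine (each $f_{\lambda,\alpha}\in C_0^\infty$) while both error terms — the cutoff error and the $\|I_\alpha(\nabla f)-\nabla f\|$ error — go to zero simultaneously in the $W^{1,p(\cdot)}$ norm. A secondary technical point is the bookkeeping of constants $c$, $c_\alpha$, $\gamma(\alpha)$ so that the Riesz potential with the paper's normalization ($\alpha=1$, constant $c$) is exactly what appears when the derivative hits the kernel; and one must verify that each intermediate function is compactly supported and in $L^{p(\cdot)}$ so that \eqref{hypo} is legitimately applicable at every stage. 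Once the localization estimates and the passage $L^2\rightsquigarrow L^{p(\cdot)}$ via Theorem~\ref{variable} are set up carefully, the rest is routine.
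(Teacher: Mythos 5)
Your overall architecture is the same as the paper's: reduce to bounded, compactly supported $f$ by Theorem~\ref{bounded}, start from the representation of Lemma~\ref{representation}, insert the cutoffs $g_\lambda$ of Lemma~\ref{lemma2} and simultaneously regularize the kernel exponent (the paper takes $|y|^{-(n-\alpha)}$ with $\alpha=1/\lambda^2$, which is exactly your ``coordinated joint limit in $\alpha$ and $\lambda$''), use the hypothesis~\eqref{hypo} to dominate everything by $I_1(|D_if|)\in L^{p(\cdot)}$, and invoke Lemma~\ref{lemma1} to control the term where the derivative falls on the regularized kernel. So the plan is the right one. However, there is one step in your sketch that would fail as written: the claim that $L^2$ convergence of the singular part, ``combined with the fact that the relevant functions also live in $L^{p_-}\cap L^{p_+}$ on a fixed compact set,'' can be upgraded to $L^{p(\cdot)}$ convergence via Theorem~\ref{variable}. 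Theorem~\ref{variable} gives the norm estimate $\|u\|_{p(\cdot)}\leq c\big(\|u\|_{2}+\|u\|_{p_+}\big)$ (since $2\leq p(\cdot)\leq p_+$, the relevant embedding is $L^{2}\cap L^{p_+}\hookrightarrow L^{p(\cdot)}$), so to conclude convergence in $L^{p(\cdot)}$ you would need the \emph{differences} to converge in $L^{p_+}$ as well, not merely to belong to it; mere membership in $L^{p_-}\cap L^{p_+}$ together with $L^2$ convergence gives nothing in $L^{p(\cdot)}$ when $p_+>2$. Lemma~\ref{lemma1} supplies only the $L^2$ information, and there is no uniform higher-integrability bound available that would let you interpolate up to $L^{p_+}$.

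The paper circumvents this by never asking Lemma~\ref{lemma1} for norm convergence in the variable space. Instead it extracts \emph{pointwise a.e.} convergence (along a subsequence) from the $L^2$ and $L^1$ estimates, exhibits a single dominating function in $L^{p(\cdot)}(\mathbb R^n)$ --- namely $cI_1(|D_if|)$ for the functions $\omega_\lambda^i$, and $(n+3)\big(|D_if|+I_1(|D_if|)\big)$ for their derivatives, the first summand coming from the monotone-in-$\lambda$ near-origin piece whose limit is identified by~\eqref{eq6}, the second from the hypothesis~\eqref{hypo} --- and then applies the dominated convergence theorem for $L^{p(\cdot)}$ spaces to get norm convergence. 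If you replace your ``$L^2\rightsquigarrow L^{p(\cdot)}$ via Theorem~\ref{variable}'' step by this a.e.-convergence-plus-domination argument, your proposal matches the paper's proof; as it stands, that step is a genuine gap.
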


\begin{proof}
Let $f$ be a function in $W^{1,p(\cdot)} (\mathbb R^{n})$. By Theorem \ref{bounded}, without loss of generality,  we may assume that $f$ is bounded and has compact support. Fix $1\leq i\leq n$ and consider the sequence of functions
\begin{equation}
\nonumber
\omega_\lambda^i(x) = \int_{\mathbb R^n} g_\lambda (y) \frac{y_{i}}{|y|^{n-\frac{1}{\lambda^2}}} D_if(x-y) dy,
\end{equation}
where $g_\lambda$ are as in Lemma \ref{lemma2} and $\lambda\in\mathbb N$. The functions $g_\lambda (y) y_{i}|y|^{-n+\frac{1}{\lambda^2}}$ are in $C_{0}^{\infty}(\mathbb R^{n})$ and $D_if \in L^{1}$ has compact support. Thus $\omega_\lambda^i \in {C_{0}^\infty (\mathbb R^n)}$.
We will show that the $|\omega_\lambda^i|$'s are bounded above by an $L^{p(\cdot)}$-function. We have that
\begin{eqnarray*}
| \omega_\lambda^i (x) |&=&  \big| \int_{\mathbb R^n} g_\lambda (y) \frac{y_{i}}{|y|^{n-\frac{1}{\lambda^2}}} D_if(x-y) dy \big|\\
&\leq &  \int_{\mathbb R^n} \frac{g_\lambda(y)|y|^{\frac{1}{\lambda^2}}}{|y|^{n-1}} \big| D_if(x-y) dy \big|\\
&\leq &  \int_{B(0,2\lambda)} \frac{|y|^{\frac{1}{\lambda^2}}}{|y|^{n-1}} \big| D_if(x-y) dy \big|.
\end{eqnarray*}
where we have used Lemma \ref{lemma2} in the last step. Using the fact that $(2 \lambda)^{\frac{1}{\lambda^2}} < 2$, for every $\lambda > 1$, we have that
\begin{equation}
\nonumber
| \omega_\lambda^i (x) |  \leq \,c I_1(|D_i(f)|)\,.
\end{equation}
But $D_i(f)$ has compact support and hence by hypothesis $I_1(|D_i(f)|)$ belongs to $L^{p(\cdot)}(\mathbb R^{n})$.

Next we will prove that a subsequence of $ \{ \omega_\lambda^i \}$,
converges to the function
\begin{equation}
\nonumber
f_i (x) = \int_{\mathbb R^n} \frac{y_i}{|y|^n} D_if(x-y)dy\, , \text{ a.e in }\; \mathbb R^n\,.
\end{equation}

We have that
\begin{equation}
\nonumber
\big| \omega_\lambda^i(x)- f_i (x) \big| =
\end{equation}
\begin{equation}
\nonumber
\big| \int_{\mathbb R^n}  \big( g_\lambda(y) |y|^{\frac{1}{\lambda^2}} - 1 \big) \frac{y_{i}}{|y|^{n}} D_{i}f(x-y)dy \big|
\end{equation}
\begin{equation}
\nonumber
\leq \int_{\mathbb R^n} \frac{\big|g_\lambda(y) |y|^{\frac{1}{\lambda^2}} - 1\big| }{|y|^{n-1}} \big| D_{i}f(x-y)\big| dy
\end{equation}
\begin{equation}
\nonumber
\leq \int_{\mathbb R^n} \frac{ \big|g_\lambda(y) |y|^{\frac{1}{\lambda^2}} - g_\lambda(y)\big| }{|y|^{n-1}} \big|D_{i}f(x-y)\big|dy + \int_{\mathbb R^n} \frac{ \big|g_\lambda(y) - 1 \big| }{|y|^{n-1}} \big|D_{i}f(x-y)\big|dy
\end{equation}
Since the support of $g_\lambda$ is in the ring $\Delta(\frac{1}{\lambda}, 2 \lambda)$, taking the $L^2$-norm for the first term and using the convolution theorem, we have
\begin{equation}
\nonumber
\big( \int_{\mathbb R^n} \big| \int_{\mathbb R^n} \frac{\big|g_\lambda(y) (|y|^{\frac{1}{\lambda^2}}-1) \big|}{|y|^{n-1}} | D_if(x-y)| dy  \big|^2dx \big)^\frac{1}{2}\leq
\end{equation}
\begin{eqnarray*}
&\leq &\big( \int_{\mathbb R^n} \big| \int_{\Delta (\frac{1}{\lambda},2\lambda)} \frac{||y|^{\frac{1}{\lambda^2}}-1|}{|y|^{n-1}} |D_if(x-y)|dy \big|^2dx \big)^\frac{1}{2}\\
&\leq &\|D_if\|_{L^2} \int_{\Delta (\frac{1}{\lambda},2\lambda)} \frac{||x|^\frac{1}{\lambda^2}-1|}{|x|^{n-1}}dx\,.
\end{eqnarray*}
Using polar coordinates we may see that the last integral converges to 0, as $\lambda \rightarrow \infty$. Also, for the second term, we have
\begin{equation}
\nonumber
\int_{\mathbb R^n}  \frac{|g_\lambda(y)-1|}{|y|^{n-1}} |D_if(x-y)|dy
\end{equation}
\begin{equation}
\nonumber
\leq \int_{B(0, \frac{2}{\lambda})} \frac{|D_if(x-y)|}{|y|^{n-1}}dy + \int_{\mathbb R^n \setminus B(0, \lambda)} \frac{|D_if(x-y)|}{|y|^{n-1}}dy
\end{equation}
But, by the convolution Theorem,
\begin{equation}
\nonumber
\int_{\mathbb R^{n}} \big | \int_{\mathbb R^{n}} \frac{\chi_{B(0,\frac{2}{\lambda})}(y) }{|y|^{n-1}} \big | D_{i}f(x-y) \big | dy  \big |dx  
\leq \| D_{i}f\|_{L^1} \int_{B(0,\frac{2}{\lambda})} \frac{dx}{|x|^{n-1}}\,.
\end{equation}
Also
\begin{equation}
\nonumber
\int_{\mathbb R^n \setminus B(0, \lambda)} \frac{|D_if(x-y)|}{|y|^{n-1}}dy \leq \lambda ^{1-n} \|D_{i}f\|_{L^{1}}.
\end{equation}
So, using a subsequence if necessary, the second term tends to zero a.e as well.

Hence
$$| \omega_\lambda^i(x)- f_i(x) | \rightarrow 0\,,$$
a.e., as $\lambda \rightarrow \infty$.

Using this, the fact that $|\omega_\lambda^i|$ is bounded by an $L^{p(\cdot)}$-function and the dominated convergence theorem for $L^{p(\cdot)}$-spaces, we have that
\begin{equation}
\label{eq4}
\big\| \omega_\lambda^i- f_i \big\|_{L^{p(\cdot)}} \rightarrow 0\,,\text{ as }\lambda \rightarrow \infty\,,
\end{equation}
for all $1\leq i\leq n$.

As we have said in the Preliminaries, $f$ can be written as
$$\frac{1}{n\omega_n}\sum_{i=1}^n f_i\,.$$
From this and (\ref{eq4}), we get that
\begin{equation}
\nonumber
\big\| \sum_{i=1}^n \frac{1}{n\omega_n} \omega_\lambda^i - f \big \|_{L^{p(\cdot)}} \rightarrow 0\,,\text{ as }\lambda \rightarrow \infty\,.
\end{equation}
Now, if we were able to prove that $D_j\big( \sum_{i=1}^n \frac{1}{n\omega_n} \omega_\lambda^i \big)$, for $1\leq i, j \leq n$, is bounded above by an $L^{p(\cdot)}$-function, then from the above convergence, it would follow that
\begin{equation}
\nonumber
\big\| D_j \big( \sum_{i=1}^n \frac{1}{n\omega_n} \omega_\lambda^i \big) - D_jf \big\|_{L^{p(\cdot)}} \rightarrow 0\,, \text{ as }\lambda \rightarrow \infty\,.
\end{equation}
To this end using Lemma \ref{lemma1}, we have that
\begin{equation}
\label{eq6}
\int_{\mathbb R^n} \frac{|D_if(x-y)|}{|y|^{n-\frac{1}{\lambda^2}}}dy \rightarrow |D_if(x)|\,, \text{ a.e in } \mathbb R^n\,.
\end{equation}
Lemma \ref{lemma2} then gives
\begin{equation}
\nonumber
|D_j\omega_\lambda^i(x)| =
\end{equation}
\begin{equation}
\nonumber
\big| \int_{\mathbb R^n} D_j\big(g_\lambda(y)\big) \frac{y_{i}}{|y|^{n-\frac{1}{\lambda^2}}} D_if(x-y)dy + \int_{\mathbb R^n} \big(g_\lambda(y)\big)D_j \big(\frac{y_{i}}{|y|^{n-\frac{1}{\lambda^2}}}\big)D_if(x-y)dy \big|
\end{equation}
\begin{eqnarray*}
&\leq & (n+3) \int_{\mathbb R^n} \frac{|D_if(x-y)|}{|y|^{n-\frac{1}{\lambda^2}}}dy\\
&\leq &(n+3) \int_{B(0,1)} \frac{|D_if(x-y)|}{|y|^{n-\frac{1}{\lambda^2}}}dy+ (n+3) \int_{\mathbb R^n \setminus B(0,1)} \frac{|D_if(x-y)|}{|y|^{n-1}}dy\,.
\end{eqnarray*}
The second term  is bounded by the Riesz potential of an $L^{p(\cdot)}(\mathbb R^n)$ function and hence  by hypothesis it is in $L^{p(\cdot)}(\mathbb R^n)$.  The sequence
$$\int_{B(0,1)} \frac{|D_if(x-y)|}{|y|^{n-\frac{1}{\lambda^2}}}dy$$
is increasing in $\lambda$ and bounded by the convolution
$$\frac{1}{|x|^{n-\frac{1}{\lambda^2}}} * D_if\,.$$
Thus by (\ref{eq6}) it is bounded by $|D_i f (x)| $, which is a function in $L^{p(\cdot)}(\mathbb R^n)$. Hence $|D_j \omega_\lambda^i|$ is bounded by an $L^{p(\cdot)}(\mathbb R^n)$-function. So
$$D_j \big( \sum_{i=1}^n \frac{1}{n\omega_n} \omega_\lambda^i \big)$$
is bounded by an $L^{p(\cdot)}(\mathbb R^n)$-function too.

Concluding, we have shown that
\begin{equation}
\nonumber
\big\|  \sum_{i=1}^n \frac{1}{n\omega_n} \omega_{\lambda} - f \big \|_{L^{p(\cdot)}} \rightarrow 0\,,\text{ as }\lambda \rightarrow \infty
\end{equation}
and
\begin{equation}
\nonumber
\big\| D_j \big(  \sum_{i=1}^n \frac{1}{n\omega_n} \omega_\lambda^i \big) - D_jf \big\|_{L^{p(\cdot)}} \rightarrow 0\,,\text{ as }\lambda \rightarrow \infty
\end{equation}
and these two facts lead us to the desired result.

\end{proof}

\section{Density results involving the dimension of the space}
Our first application of Theorem \ref{theorem1} shows that if the exponent dominates the dimension of the space, then we have density. Note that this also answers affirmatively a question posed by P. Hasto in \cite{hasto}, for $\Omega=\mathbb R^n$.
\begin{theorem}
If $p_-\geq n$, then
$C_0^\infty (\mathbb R^n)$ is dense in $W^{1,p(\cdot)} (\mathbb R^n)$.
\end{theorem}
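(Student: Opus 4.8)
The plan is to deduce this from Theorem \ref{theorem1}. Since $p_-\ge n$ --- and we may assume $n\ge 2$, for otherwise $I_1$ is not defined and density in $W^{1,p(\cdot)}(\mathbb R)$ is classical --- we have $p_-\ge 2$, so Theorem \ref{theorem1} reduces the statement to checking that $I_1(f)\in L^{p(\cdot)}(\mathbb R^n)$ for every compactly supported $f\in L^{p(\cdot)}(\mathbb R^n)$. Replacing $f$ by $|f|$, I may assume $f\ge 0$ and $\operatorname{supp}f\subseteq B(0,R)$ with $R\ge 1$. The first step is to record that $f\in L^1(\mathbb R^n)$ (compact support together with H\"older's inequality on $B(0,R)$) and hence, by Theorem \ref{variable} applied with the exponents $1\le p_-\le p(\cdot)$, also $f\in L^{p_-}(\mathbb R^n)$; alternatively $f\in L^{p_-}$ is seen directly by splitting $\int|f|^{p_-}$ over $\{|f|\le 1\}$ and $\{|f|>1\}$.

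Next I would write $I_1(f)=I_1(f)\chi_{B(0,2R)}+I_1(f)\chi_{\mathbb R^n\setminus B(0,2R)}$ and estimate the two modular contributions separately. For the near part, fix $q$ with $1<q<n$ and $\frac{nq}{n-q}\ge p_+$, which is possible since $\frac{nq}{n-q}\uparrow\infty$ as $q\uparrow n$ while $p_+\ge p_-\ge n$. Because $q<n\le p_-$ and $f$ has compact support, $f\in L^q(\mathbb R^n)$, so (\ref{riesz1}) with $\alpha=1$ gives $I_1(f)\in L^{nq/(n-q)}(\mathbb R^n)$; H\"older's inequality on the bounded set $B(0,2R)$ then yields $I_1(f)\in L^{p_+}(B(0,2R))$, whence
\begin{equation}
\nonumber
\int_{B(0,2R)}|I_1(f)(x)|^{p(x)}\,dx\le|B(0,2R)|+\|I_1(f)\|_{L^{p_+}(B(0,2R))}^{p_+}<\infty .
\end{equation}

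For the far part the key is an elementary decay estimate: if $|x|\ge 2R$ and $y\in\operatorname{supp}f$, then $|x-y|\ge|x|/2$, so
\begin{equation}
\nonumber
I_1(f)(x)=c\int_{B(0,R)}\frac{f(y)}{|x-y|^{n-1}}\,dy\le c\,2^{\,n-1}\,\|f\|_{L^1}\,|x|^{1-n}.
\end{equation}
Using $|x|\ge 2R\ge 1$ and $p(x)\ge p_-$ this gives, with $C$ depending only on $f$ and on $p_-,p_+$,
\begin{equation}
\nonumber
\int_{|x|\ge 2R}|I_1(f)(x)|^{p(x)}\,dx\le C\int_{|x|\ge 2R}|x|^{(1-n)p_-}\,dx=C\,n\omega_n\int_{2R}^{\infty}r^{\,n-1-(n-1)p_-}\,dr<\infty ,
\end{equation}
the last integral being finite precisely because $(n-1)p_->n$, which is exactly where the hypothesis $p_-\ge n$ is used. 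Adding the two bounds gives $\varrho_{p(\cdot)}(I_1(f))<\infty$, hence $I_1(f)\in L^{p(\cdot)}(\mathbb R^n)$, and Theorem \ref{theorem1} completes the proof.

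The near part and the modular bookkeeping are routine; the only delicate step is the far part. The decay $|x|^{1-n}$ of the Riesz potential of a compactly supported function cannot be improved in general (it is attained whenever $\int f\ne 0$), so the needed integrability of $|I_1(f)|^{p(\cdot)}$ near infinity holds only on a knife's edge, and it is exactly the relation $p_-\ge n$ --- equivalently $(n-1)p_->n$ --- that tips the balance, which is the conceptual point stressed in the introduction.
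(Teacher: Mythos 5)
Your route to verifying the hypothesis of Theorem \ref{theorem1} is genuinely different from the paper's. The paper applies the Hardy--Littlewood--Sobolev estimate (\ref{riesz1}) twice globally to conclude $I_1(f)\in L^2(\mathbb R^n)\cap L^{nq/(n-q)}(\mathbb R^n)$ and then invokes the embedding Theorem \ref{variable} to land in $L^{p(\cdot)}(\mathbb R^n)$; you instead split $I_1(f)$ into a near and a far part and estimate the modular $\varrho_{p(\cdot)}(I_1(f))$ directly, using (\ref{riesz1}) only to control the near part on the bounded set $B(0,2R)$ and an elementary pointwise decay bound $I_1(f)(x)\le C|x|^{1-n}$ at infinity. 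This is more self-contained and has the merit of making explicit exactly what integrability at infinity is needed. The near-part argument and the modular bookkeeping are correct.

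There is, however, a genuine gap in the far part, precisely at the point you identify as the crux. The integral $\int_{2R}^{\infty}r^{\,n-1-(n-1)p_-}\,dr$ converges if and only if $(n-1)p_->n$, i.e. $p_->n/(n-1)$, and you assert that this follows from $p_-\ge n$. It does for $n\ge 3$, but for $n=2$ the hypothesis gives only $(n-1)p_-=p_-\ge 2=n$, with equality when $p_-=2$, and then your integral diverges. This is not an artifact of a lossy estimate: for $n=2$, $p(\cdot)\equiv 2$ and $f\ge 0$ compactly supported with $\int f>0$ one has $I_1(f)(x)\ge c|x|^{-1}$ for large $|x|$, so $I_1(f)\notin L^2(\mathbb R^2)$ and the hypothesis of Theorem \ref{theorem1} genuinely fails; no repair of the far-part estimate can save this route in that case. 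So your argument proves the theorem for $n\ge 3$, and for $n=2$ only when $p_->2$; the boundary case $n=p_-=2$ needs a separate treatment. (It is worth noting that the paper's own proof silently has the same defect: for $n=2$ it applies (\ref{riesz1}) to $f\in L^{2n/(n+2)}=L^1(\mathbb R^2)$, which is outside the range $p>1$ in which that estimate is valid, and its conclusion $I_1(f)\in L^2(\mathbb R^2)$ is then false. Your computation has the virtue of exposing this.) Finally, your dismissal of $n=1$ is somewhat beside the point, since for $n=1$ the reduction to Theorem \ref{theorem1} would in any case require $p_-\ge 2$, which $p_-\ge n=1$ does not provide.
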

\begin{proof}
Let $f\in L^{p(\cdot)}(\mathbb{R}^n)$ be a function with compact support. Since $p_-\geq 2$ we have that
$$f\in L^{\frac{2n}{n+2}}(\mathbb R^n)$$
and hence by what we have said in the Preliminaries $I_1(f)\in L^2(\mathbb R^n)$.

On the other hand since $p_-\geq n$ and $f$ has compact support, we have that $f\in L^q(\mathbb R^n)$, for any $q
\leq n$. Choosing $q<n$ such that $\frac{nq}{n-q}\geq p_+$ we have that
$I_1(f)\in L^\frac{nq}{n-q}(\mathbb R^n)$.

Combining the above we get that
$$I_1(f)\in L^2(\mathbb R^n)\cap L^\frac{nq}{n-q}(\mathbb R^n)\,,$$
which by Theorem \ref{variable} implies that $I_1(f)\in L^{p(\cdot)}(\mathbb R^n)$. Using Theorem \ref{theorem1} we have that
$C_0^\infty (\mathbb R^n)$ is dense in $W^{1,p(\cdot)} (\mathbb R^n)$.
\end{proof}
If $p_-< n$ one has  to impose some restriction on $p_+$ as well. In particular we have the following.
\begin{theorem}
If $2\leq p_-< n$ and $ p_+ \leq \frac{n p_-}{n- p_-}$, then
$C_0^\infty (\mathbb R^n)$ is dense in $W^{1,p(\cdot)} (\mathbb R^n)$.
\end{theorem}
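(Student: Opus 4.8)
The plan is to reduce everything, via Theorem \ref{theorem1}, to showing that $I_1(f)\in L^{p(\cdot)}(\mathbb R^n)$ for every compactly supported $f\in L^{p(\cdot)}(\mathbb R^n)$; the argument then runs exactly parallel to the proof of the case $p_-\geq n$, the only change being the choice of the second constant Lebesgue exponent. So let $f\in L^{p(\cdot)}(\mathbb R^n)$ have compact support.

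First I would place $f$ into two constant-exponent spaces. On the one hand, since $\tfrac{2n}{n+2}\leq 2\leq p_-$ and $f$ is supported in a bounded set, $f\in L^{\frac{2n}{n+2}}(\mathbb R^n)$, so by the Sobolev-type inequality (\ref{riesz1}) (with $\alpha=1$) we get $I_1(f)\in L^2(\mathbb R^n)$, just as before. On the other hand, arguing as in the proof of the previous theorem, compact support gives $f\in L^{p_-}(\mathbb R^n)$; since $p_-<n$, another application of (\ref{riesz1}) with $\alpha=1$ and $p=p_-$ yields $I_1(f)\in L^{\frac{np_-}{n-p_-}}(\mathbb R^n)$.

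Next I would glue the two pieces together. We have
$$I_1(f)\in L^2(\mathbb R^n)\cap L^{\frac{np_-}{n-p_-}}(\mathbb R^n)\,,$$
and by hypothesis $2\leq p_-\leq p(x)\leq p_+\leq \tfrac{np_-}{n-p_-}$ for a.e.\ $x$, so Theorem \ref{variable}, applied with the constant exponents $2$ and $\tfrac{np_-}{n-p_-}$ as the lower and upper ones and $p(\cdot)$ in between, gives $I_1(f)\in L^{p(\cdot)}(\mathbb R^n)$. An application of Theorem \ref{theorem1} then completes the proof. I do not expect a serious obstacle here; the only thing to watch is the bookkeeping of exponents — in particular that $\tfrac{2n}{n+2}\leq p_-$ and $2\leq \tfrac{np_-}{n-p_-}$, so that the embedding of Theorem \ref{variable} is legitimately applicable — together with the observation that the hypotheses $2\leq p_-<n$ force $n\geq 3$, which is what makes $\tfrac{2n}{n+2}>1$ and hence (\ref{riesz1}) usable.
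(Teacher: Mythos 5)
Your argument is correct and coincides with the paper's own proof: both obtain $I_1(f)\in L^2(\mathbb R^n)$ from $f\in L^{\frac{2n}{n+2}}(\mathbb R^n)$ and $I_1(f)\in L^{\frac{np_-}{n-p_-}}(\mathbb R^n)$ from $f\in L^{p_-}(\mathbb R^n)$ via (\ref{riesz1}), then interpolate with Theorem \ref{variable} and conclude with Theorem \ref{theorem1}. Your extra bookkeeping (noting that $2\leq p_-<n$ forces $n\geq 3$, so $\frac{2n}{n+2}>1$) is a worthwhile check that the paper leaves implicit.
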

\begin{proof}
Let $f\in L^{p(\cdot)}(\mathbb{R}^n)$ be a function with compact support. As before by the fact that $p_-\geq 2$ we have that $I_1(f)\in L^2(\mathbb R^n)$.

Also, by (\ref{riesz1})
$$I_1(f)\in L^\frac{np_-}{n-p_-}(\mathbb R^n)\,.$$
So
$$I_1(f)\in  L^2(\mathbb R^n)\cap L^\frac{np_-}{n-p_-}(\mathbb R^n)$$
and thus by Theorem \ref{variable} we have that $I_1(f)\in L^{p(\cdot)}(\mathbb R^n)$. The result follows again by Theorem \ref{theorem1}.
\end{proof}
\section{Density results involving the maximal operator}
Given $f\in L^1_{loc}(\mathbb R^n)$, then $Mf$, the Hardy-Littlewood maximal function of $f$, is defined for any $x\in\mathbb R^n$, by
\begin{equation}
\nonumber
Mf(x)=\sup_{Q\in x}\frac{1}{|Q|}\int_Q |f(y)|\,dy\,,
\end{equation}
where the supremum is taken over all $Q\subset\mathbb R^n$ that contain $x$ and whose sides are parallel to the coordinate axes, see \cite[Definition 3.1]{cruz1}.

Unlike the classical case the maximal operator is not necessarily bounded in $L^{p(\cdot)}({\mathbb R^n})$. For our purpose local boundedness is enough: in particular we say that the maximal operator is locally bounded in $L^{p(\cdot)}({\mathbb R^n})$ if for every  bounded open subset $\Omega$ of $\mathbb R^n$, the maximal operator is bounded in  $L^{p(\cdot)}(\Omega)$, see \cite[Definition 6.13]{cruz1}.
We have the following
\begin{theorem}
\label{maximal}
If $p_-\geq 2$ and the maximal operator is locally bounded in $L^{p(\cdot)}(\mathbb R^n)$, then $C_0^\infty (\mathbb R^n)$ is dense in $W^{1,p(\cdot)} (\mathbb R^n)$.
\end{theorem}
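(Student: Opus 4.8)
The plan is to invoke Theorem \ref{theorem1}, so it suffices to verify that for every compactly supported $f\in L^{p(\cdot)}(\mathbb R^n)$ one has $I_1(f)\in L^{p(\cdot)}(\mathbb R^n)$. Fix such an $f$, supported in some ball $B$, and let $\Omega$ be a bounded open set containing the $1$-neighbourhood of $B$; by hypothesis the maximal operator is bounded on $L^{p(\cdot)}(\Omega)$. The natural route is to dominate $I_1(f)$ pointwise, at least on $\Omega$ and up to constants, by $Mf$. First I would split the Riesz kernel at radius $1$: write $I_1(f)(x)=c\int_{|y|<1}\frac{f(x-y)}{|y|^{n-1}}\,dy+c\int_{|y|\ge1}\frac{f(x-y)}{|y|^{n-1}}\,dy$. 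The ``far'' piece is harmless: since $f\in L^{1}(\mathbb R^n)$ (it is compactly supported and in $L^{p(\cdot)}$, hence in $L^1$ by Theorem \ref{variable} or Hölder) and the kernel is bounded by $1$ there, this term is bounded by $c\|f\|_{L^1}$, a constant; being supported in the fixed bounded set $\Omega$ (for $x$ outside $\Omega$ the whole integral vanishes once $B$ is chosen inside $\Omega$'s interior appropriately — more precisely $I_1(f)$ is supported wherever $f$ can be reached, but it need not have compact support, so instead I note the far piece is globally bounded and the near piece vanishes off $\Omega$), hence in $L^{p(\cdot)}$ of any bounded set.

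The ``near'' piece is where the maximal function enters. The standard estimate (see e.g.\ the proof of the Riesz potential pointwise bound, as in \cite[p.\ 60]{stein} or \cite[Proposition 6.22]{cruz1}) gives, for the truncated Riesz potential of index $1$,
\begin{equation}
\nonumber
\Big|\int_{|y|<1}\frac{f(x-y)}{|y|^{n-1}}\,dy\Big|\le C\,Mf(x)\,,
\end{equation}
obtained by decomposing the annuli $2^{-k-1}\le|y|<2^{-k}$, bounding $\int_{|y|<2^{-k}}|f(x-y)|\,dy\le C\,2^{-kn}Mf(x)$, and summing the resulting geometric series $\sum_k 2^{-k(n-1)}2^{-kn(1-1)}\cdot 2^{kn}$ — wait, the summation is $\sum_k (2^{-k})^{-(n-1)}\cdot 2^{-kn}Mf(x)=\sum_k 2^{-k}Mf(x)<\infty$. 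Now observe that on $\Omega$ we only ever see $f(x-y)$ with $|y|<1$, so the near piece equals the truncated Riesz potential of $f\chi_{\Omega'}$ where $\Omega'$ is a slightly larger bounded set, and $Mf$ on $\Omega$ is controlled by $M(f\chi_{\Omega'})$. Therefore $\chi_\Omega\,I_1(f)$ is dominated pointwise by $C\big(M(f\chi_{\Omega'})+\|f\|_{L^1}\big)$, and outside $\Omega$ the whole $I_1(f)$ is just the far piece, globally bounded by $c\|f\|_{L^1}$ — but a nonzero constant is not in $L^{p(\cdot)}(\mathbb R^n)$, so I must be more careful: actually $I_1(f)(x)\to 0$ as $|x|\to\infty$, and more usefully, for $|x|$ large, $|x-y|\ge|x|-R$ on $\operatorname{supp}f$, giving $|I_1(f)(x)|\le c\|f\|_{L^1}\,(|x|-R)^{-(n-1)}$, which lies in $L^{q}$ for every $q>\frac{n}{n-1}$ near infinity; combined with $p_-\ge2>\frac{n}{n-1}$ one gets the tail in $L^{p(\cdot)}$ of the exterior region via Theorem \ref{variable}.

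So the key steps, in order, are: (1) reduce to proving $I_1(f)\in L^{p(\cdot)}$ for compactly supported $f\in L^{p(\cdot)}$, via Theorem \ref{theorem1}; (2) split $I_1(f)$ into its near part (kernel on $|y|<1$) and far part; (3) bound the far part by $c\|f\|_{L^1}(1+|x|)^{-(n-1)}$ on the exterior and by a constant near $\operatorname{supp}f$, placing it in $L^{p(\cdot)}$ using $p_-\ge 2>n/(n-1)$ and Theorem \ref{variable}; (4) bound the near part pointwise by $C\,Mf$ via the dyadic-annuli argument, localise so that it is supported (as far as its relevance goes) in a bounded set $\Omega$ on which $M$ — equivalently $M$ applied to a compactly supported modification of $f$ — is bounded on $L^{p(\cdot)}$ by hypothesis; (5) conclude $I_1(f)\in L^{p(\cdot)}(\mathbb R^n)$ and apply Theorem \ref{theorem1}. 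Finally, the consequence about local log-Hölder continuity follows since, by \cite[Theorems 6.14 and 6.17]{cruz1} (or \cite{dhn}), local log-Hölder continuity of $p(\cdot)$ implies the maximal operator is locally bounded on $L^{p(\cdot)}(\mathbb R^n)$.

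I expect the main obstacle to be step (4): handling the interplay between the pointwise bound $|I_1^{\mathrm{loc}}f|\le CMf$, which holds globally, and the hypothesis, which only gives boundedness of $M$ on bounded sets. The resolution is that the truncation at radius $1$ means $I_1^{\mathrm{loc}}f(x)$ depends only on the values of $f$ within distance $1$ of $x$; hence for $x$ ranging over a fixed bounded $\Omega$ it equals $I_1^{\mathrm{loc}}(f\chi_{\Omega_1})(x)$ with $\Omega_1$ the $1$-neighbourhood of $\Omega$, and one may freely replace $Mf$ by $M(f\chi_{\Omega_1})$ on $\Omega$, to which the local boundedness hypothesis applies since $f\chi_{\Omega_1}=f$ is already compactly supported. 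A secondary, purely technical nuisance is making the tail estimate in step (3) clean enough that no constant function sneaks in; writing the exterior bound as a negative power of $|x|$ and comparing exponents via $p_+<\infty$ and Theorem \ref{variable} takes care of it.
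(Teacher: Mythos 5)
Your proposal takes the same route as the paper: the paper's entire proof consists of invoking ``an argument similar to that of \cite[Proposition 6.22]{cruz1}'' to obtain $\|I_1(f)\|_{p(\cdot)}\leq c\|f\|_{p(\cdot)}$ for compactly supported $f$, and then applying Theorem \ref{theorem1}. What you have done is reconstruct that cited argument explicitly: the near/far splitting of the kernel at radius $1$, the dyadic-annuli domination of the truncated potential by $C\,Mf(x)$ (your series does sum to $\sum_k 2^{-k}Mf(x)$, as you eventually write), and the key localisation observation that the truncated potential at $x\in\Omega$ only sees $f$ on the $1$-neighbourhood of $\Omega$, so that \emph{local} boundedness of $M$ suffices. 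That part is correct and is exactly the content the paper delegates to the citation.

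There is, however, a genuine gap in your step (3), and it is worth naming because it is invisible in the paper's two-line proof but surfaces once the details are written out. You assert $p_-\geq 2>\frac{n}{n-1}$; for $n=2$ this reads $2>2$ and is false. The problem is not cosmetic: for nonnegative compactly supported $f$ the far part of $I_1(f)$ decays exactly like $\|f\|_{L^1}\,|x|^{-(n-1)}$ at infinity, and $|x|^{-(n-1)}$ lies in $L^{p_-}$ near infinity if and only if $(n-1)p_->n$, which for $p_-=2$ forces $n\geq 3$. Indeed, for $n=2$ and $p\equiv 2$ one has $I_1(\chi_{B(0,1)})(x)\sim c\,|x|^{-1}\notin L^{2}(\mathbb R^2)$, so the hypothesis of Theorem \ref{theorem1} fails outright in that case and no sharper tail estimate can repair it. Your argument is therefore complete for $n\geq 3$ (and for $n=2$ when $p$ stays strictly above $2$ near infinity), but not in the borderline case $n=2$, $p_-=2$. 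To be fair, the paper's own proof of Theorem \ref{maximal} suffers from exactly the same defect, since the inequality $\|I_1(f)\|_{p(\cdot)}\leq c\|f\|_{p(\cdot)}$ it asserts is false for $n=2$, $p\equiv 2$; your write-up simply makes the difficulty visible. Everything else, including the final appeal to Theorem \ref{theorem1} and the log-H{\"o}lder remark, matches the paper.
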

\begin{proof}
Let $f\in L^{p(\cdot)}({\mathbb R^n})$ with compact support. Using an argument similar to that of \cite[Proposition 6.22]{cruz1}, we have that
\begin{equation}
\nonumber
\|I_1(f)\|_{p(\cdot)}\leq c\|f\|_{p(\cdot)}\,,
\end{equation}
where the constant $c>0$ depends on the bound of the maximal operator on the support of $f$ and also on its diameter. Hence using Theorem \ref{theorem1} we get the required result.
\end{proof}
\begin{remark}
Note that as in \cite[Proposition 6.22]{cruz1} we could have assumed that the maximal operator is locally bounded in $L^{p'(\cdot)}(\mathbb R^n)$, where as in the classical case $\frac{1}{p(x)}+\frac{1}{p'(x)}=1$,  for all $x\in\mathbb R^n$.
\end{remark}
A sufficient condition for the local boundedness of the maximal operator is local log-H{\"o}lder continuity, see \cite[p. 246]{cruz1}. Hence by Theorem \ref{maximal} we have the following
\begin{theorem}
\label{log-Holder}
If $p_-\geq 2$ and $p$ is locally log-H{\"o}lder continuous, then $C_0^\infty (\mathbb R^n)$ is dense in $W^{1,p(\cdot)} (\mathbb R^n)$.
\end{theorem}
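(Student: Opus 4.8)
The plan is to deduce Theorem \ref{log-Holder} directly from Theorem \ref{maximal} together with the standard fact, recalled in the excerpt, that local log-H{\"o}lder continuity of the exponent implies that the Hardy--Littlewood maximal operator is locally bounded on $L^{p(\cdot)}(\mathbb R^n)$. Concretely, I would argue as follows. Suppose $p_-\geq 2$ and $p$ is locally log-H{\"o}lder continuous. Fix an arbitrary bounded open set $\Omega\subset\mathbb R^n$. The restriction $p|_\Omega$ still satisfies $1<p_-\leq p_+<\infty$ and is locally log-H{\"o}lder continuous on $\Omega$; by the cited result (\cite[p.~246]{cruz1}) the maximal operator is bounded on $L^{p(\cdot)}(\Omega)$. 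Since $\Omega$ was arbitrary, the maximal operator is locally bounded on $L^{p(\cdot)}(\mathbb R^n)$ in the sense of \cite[Definition 6.13]{cruz1}. Now apply Theorem \ref{maximal}, whose hypotheses ($p_-\geq 2$ and local boundedness of the maximal operator) are exactly what we have just verified; its conclusion is that $C_0^\infty(\mathbb R^n)$ is dense in $W^{1,p(\cdot)}(\mathbb R^n)$, which is what we wanted.

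The key steps, in order, are: (1) record the implication ``local log-H{\"o}lder continuity $\Rightarrow$ local boundedness of $M$'', citing \cite[p.~246]{cruz1}; (2) observe that this hypothesis, combined with $p_-\geq 2$, is precisely the hypothesis of Theorem \ref{maximal}; (3) invoke Theorem \ref{maximal} to conclude. No new estimates are needed, since all the analytic work has already been done in Lemma \ref{lemma1}, Lemma \ref{lemma2}, Theorem \ref{theorem1} and Theorem \ref{maximal}.

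There is essentially no obstacle here: the statement is a corollary obtained by chaining two already-established facts. The only point requiring a word of care is to make sure the notion of local log-H{\"o}lder continuity used is the one under which the quoted boundedness result holds, and that this boundedness is understood in the ``local'' sense (boundedness on $L^{p(\cdot)}(\Omega)$ for every bounded open $\Omega$) matching Theorem \ref{maximal}'s hypothesis, rather than global boundedness on all of $\mathbb R^n$, which would additionally require control of $p$ at infinity. So the proof is short, and I would present it essentially as the three-line deduction above.
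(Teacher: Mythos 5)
Your proposal is correct and is essentially the paper's own argument: the paper derives Theorem \ref{log-Holder} as an immediate consequence of Theorem \ref{maximal} together with the cited fact that local log-H{\"o}lder continuity implies local boundedness of the maximal operator. Nothing further is needed.
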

\begin{remark}
As we have already mentioned in the introduction our approach in order to assure the validity of Theorem \ref{log-Holder} provides us with an alternative proof of this important result, when $p_-\geq 2$.
\end{remark}

\end{document}